\theoremstyle{plain}
\newtheorem{Lem}{Lemma}
\newtheorem{Cor}{Corollary}
\newtheorem{Thm}{Theorem}
\newcommand*{\supp}{\ensuremath{\mathrm{supp\,}}}
\newcommand*{\Ann}{\ensuremath{\mathrm{Ann\,}}}
\newcommand*{\R}{\ensuremath{\mathbb{R}}}
\newcommand*{\Z}{\ensuremath{\mathbb{Z}}}
\newcommand*{\C}{\ensuremath{\mathbb{C}}}
\begin{document}
	
	\date{}
	
	\author{
		L\'aszl\'o Sz\'ekelyhidi\\
		{\small\it Institute of Mathematics, University of Debrecen,}\\
		{\small\rm e-mail: \tt szekely@science.unideb.hu,}
	%	Kedumetse Vati\\
	%	{\small\it Institute of ???, University of ???,}\\
	%	{\small\rm e-mail: \tt ???,}
	}
	
	\title{Spectral Synthesis on Direct Products}

	\maketitle
	
	\begin{abstract}
		In a former paper we introduced the concept of localization of ideals in the Fourier algebra of a locally compact Abelian group. It turns out that localizability of a closed ideal in the Fourier algebra is equivalent to the synthesizability of the annihilator of that closed ideal which corresponds to this ideal in the measure algebra. This equivalence provides an effective tool to prove synthesizability of varieties on locally compact Abelian groups. In another paper we used this method to show that when investigating synthesizability of a variety, roughly speaking, compact elements of the group can be neglected. Using these results, in this paper we completely characterize those locally compact Abelian groups on which spectral synthesis holds.
	\end{abstract}

	\footnotetext[1]{The research was supported by the  the
	Hungarian National Foundation for Scientific Research (OTKA),
	Grant No.\ K-134191.}\footnotetext[2]{Keywords and phrases:
	variety, spectral synthesis}\footnotetext[3]{AMS (2000) Subject Classification: 43A45, 22D99}
	
	\section{Introduction}
	Let $G$ be a locally compact Abelian group. Spectral synthesis deals with uniformly closed translation invariant linear spaces of continuous complex valued functions on $G$. Such a space is called a {\it variety}.  We say that a variety is {\it synthesizable}, if its finite dimensional subvarieties span a dense subspace in the variety. If every subvariety of a variety is synthesizable, then we say that {\it spectral synthesis} holds for the variety. If every variety on a topological Abelian group is synthesizable, then we say that {\it spectral synthesis holds} on the group. On commutative topological groups finite dimensional varieties of continuous functions are completely characterized: they consist of exponential polynomials. {\it Exponential polynomials} on a topological Abelian group are defined as the elements of the complex algebra of continuous complex valued functions generated by all continuous homomorphisms into the multiplicative group of nonzero complex numbers ({\it exponentials}), and all continuous homomorphisms into the additive group of all complex numbers ({\it additive functions}).  An {\it exponential monomial} is a function of the form
	$$
	x\mapsto P\big(a_1(x),a_2(x),\dots,a_n(x)\big)m(x),
	$$
	where $P$ is a complex polynomial in $n$ variables, the $a_i$'s are additive functions, and $m$ is an exponential. Every exponential polynomial is a linear combination of exponential monomials. For more about spectral analysis and synthesis on groups see \cite{MR2680008,MR3185617}.
	\vskip.2cm
	
	In \cite{MR2340978}, the authors characterized those discrete Abelian groups having spectral synthesis: spectral synthesis holds on the discrete Abelian group  if and only if it has finite torsion-free rank. In particular, from this result it follows, that if spectral synthesis holds on $G$ and $H$, then it holds on $G\times H$. Unfortunately, such a result does not hold in the non-discrete case. Namely, by the fundamental result of L.~Schwartz \cite{MR0023948}, spectral synthesis holds on $\R$, but D.~I.~Gurevich showed in \cite{MR0390759} that spectral synthesis fails to hold on $\R\times\R$. In this paper we enlighten this mysterious situation by proving that if spectral synthesis holds on the locally compact Abelian group $G$, then it holds on $G\times \Z$ as well, however, by  \cite{MR0390759}, it does not hold with $\R$ instead of $\Z$. Using this result we characterize those locally compact Abelian groups having spectral synthesis. The idea is that starting with $G$, which has spectral synthesis, we can extend it by either $\Z$, or, due to our result in \cite{Sze23b}, by a compact Abelian group so that the resulting group has spectral synthesis as well. Starting with $\R$, and applying this process, by the basic structure theory of locally compact Abelian groups we can reach any locally compact Abelian group with spectral synthesis, in finitely many steps. Our main tool is the concept of localizability of ideals in the Fourier algebra (see \cite{Sze23a}). 
	
	We shall use the following simple result about spectral synthesis.
	
	\begin{Thm}\label{subhom}
		If spectral synthesis holds on a topological Abelian group, then it holds on every continuous homomorphic image of it.
	\end{Thm}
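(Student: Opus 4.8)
The plan is to transfer varieties back and forth along the given homomorphism. Let $\varphi\colon G\to H$ be a continuous surjective homomorphism, where spectral synthesis holds on $G$, and consider the pullback $\varphi^{*}f=f\circ\varphi$. First I would record the elementary properties of $\varphi^{*}$: it is a linear injection of $\mathcal C(H)$ into $\mathcal C(G)$ (injectivity using the surjectivity of $\varphi$), it is continuous for the topology of uniform convergence on compact sets (since $\varphi$ carries compact sets to compact sets), and it intertwines translations in the sense that $\tau_{g}(\varphi^{*}f)=\varphi^{*}(\tau_{\varphi(g)}f)$ for every $g\in G$. The decisive structural fact I would establish is that $\varphi^{*}$ is a topological isomorphism of $\mathcal C(H)$ onto a closed translation invariant subspace of $\mathcal C(G)$, namely the space of functions constant on the cosets of $\Ker\varphi$. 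This is where the topology enters, and it is the point that forces $\varphi$ to be open (a quotient map); in the locally compact setting underlying the paper this is automatic.

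Granting this, given any variety $V\subseteq\mathcal C(H)$ I would set $W=\varphi^{*}V$. By the intertwining relation and the surjectivity of $\varphi$ the translates of $\varphi^{*}f$ are exactly the $\varphi^{*}$-images of the translates of $f$, so $W$ is translation invariant; since $\varphi^{*}$ is a homeomorphism onto a closed subspace, $W$ is also closed, hence a variety on $G$. Because spectral synthesis holds on $G$, the exponential monomials contained in $W$ span a dense subspace of $W$.

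The heart of the argument is to show that each exponential monomial of $W$ is the $\varphi^{*}$-image of an exponential monomial lying in $V$. So let $u\in W$ be an exponential monomial on $G$; writing $u=\varphi^{*}w$ with $w\in\mathcal C(H)$, the intertwining relation identifies the variety generated by $u$ with the $\varphi^{*}$-image of the variety generated by $w$, and as $\varphi^{*}$ is an injective homeomorphism onto its closed image this forces the variety generated by $w$ to be finite dimensional. Hence $w$ is an exponential polynomial on $H$, say $w=\sum_{j}Q_{j}n_{j}$ with distinct exponentials $n_{j}$. Pulling back, $u=\sum_{j}(Q_{j}\circ\varphi)(n_{j}\circ\varphi)$, and the exponentials $n_{j}\circ\varphi$ are again distinct because $\varphi$ is surjective; comparing this with the fact that $u$ is a single exponential monomial and invoking the uniqueness of the decomposition of an exponential polynomial into monomials with distinct exponentials, I conclude that $w$ itself is a single exponential monomial. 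Finally $w=(\varphi^{*})^{-1}u$ lies in $(\varphi^{*})^{-1}W=V$.

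Putting the pieces together, I would take an arbitrary $f\in V$, approximate $\varphi^{*}f\in W$ uniformly on compact sets by finite linear combinations of exponential monomials of $W$, rewrite these as $\varphi^{*}$ of exponential monomials of $V$, and transport the approximation back to $H$ using the continuity of $(\varphi^{*})^{-1}$. This exhibits $f$ as a limit of linear combinations of exponential monomials belonging to $V$, so $V$ is synthesizable and spectral synthesis holds on $H$. The main obstacle is the structural claim of the first paragraph, that $\varphi^{*}$ is a topological isomorphism onto a closed subspace, since without openness of $\varphi$ the pushed-down functions need not be continuous and the closure need not commute with $\varphi^{*}$; the secondary subtlety is ensuring that a monomial pushes down to a single monomial rather than a genuine polynomial, which is exactly what the uniqueness of the exponential-monomial decomposition takes care of.
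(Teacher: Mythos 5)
Your proposal follows the same overall strategy as the paper's proof: pull the variety on $H$ back along the surjection to a variety on $G$, invoke spectral synthesis there, and push the exponential monomials back down. Within that skeleton, however, you treat the two delicate points by genuinely different means, and in both cases more explicitly than the paper. For the descent of monomials, the paper argues that a function on $H$ is additive (resp.\ an exponential) exactly when its composition with $\Phi$ is, and then says ``consequently'' a function on $H$ is an exponential monomial iff its pullback is. That ``consequently'' is not immediate: an exponential-monomial representation of $\varphi\circ\Phi$ on $G$ need not a priori be built from additive functions and an exponential that factor through $\Phi$. Your route --- characterizing exponential polynomials by finite dimensionality of the generated variety, transporting that property through the isomorphism $\varphi^{*}$, and then isolating a single monomial via the uniqueness of the decomposition into monomials with distinct exponentials (distinctness being preserved because $\varphi$ is onto) --- closes exactly this gap. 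Likewise, your structural claim that $\varphi^{*}$ is a topological isomorphism onto a closed subspace is precisely what the paper's ``as it is easy to check'' and its final density-transfer step rely on silently.

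One caveat: your assertion that openness of $\varphi$ is ``automatic'' in the locally compact setting is not correct. A continuous surjective homomorphism of locally compact Abelian groups need not be open --- the identity map from $\R$ with the discrete topology onto $\R$ with its usual topology is a counterexample; the open mapping theorem requires, e.g., $\sigma$-compactness of the domain. So, as written, your argument (and, implicitly, the paper's) establishes the theorem for open homomorphisms, or under hypotheses guaranteeing openness. This covers every use made of the theorem in the paper --- the projection $G\times\Z\to G$, the quotient $G\to G/B$, and maps onto discrete groups are all open --- but it is a limitation shared with the paper's proof rather than one your argument removes, and it is worth stating honestly as a hypothesis instead of claiming it comes for free.
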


\begin{proof}
	Assume that $G,H$ are topological Abelian groups, spectral synthesis holds on $G$, and $\Phi:G\to H$ is a continuous homomorphism of $G$ onto $H$. Let $W$ be a variety on $H$, then 
	$$
	V=\{\varphi\circ \Phi:\, \varphi\enskip\text{is in $W$}\} 
	$$
	is a variety on $G$, as it is easy to check. Clearly, the function $a:H\to\C$ is an additive function on $H$ is and only if $a\circ \Phi$ is an additive function on $G$, and $m:H\to \C$ is an exponential on $H$ if and only if $m\circ \Phi$ is an exponential on $G$. Consequently, $\varphi:H\to\C$ is an exponential monomial on $H$ if and only if it is an exponential monomial on $G$. As the exponential monomials in $V$ spane a dense subspace in $V$, it follows that the exponential monomials span a dense subspace in $W$, which proves our theorem.
\end{proof}
	
\section{Localization}
\indent In our former paper \cite{Sze23a} we introduced the concept of localization of ideals in the Fourier algebra of a locally compact Abelian group. We recall this concept here.

Let $G$ be a locally compact Abelian group and let $\mathcal A(G)$ denote its Fourier algebra, that is, the algebra of all Fourier transforms of compactly supported complex Borel measures on $G$. This algebra is topologically isomorphic to the measure algebra $\mathcal M_c(G)$. For the sake of simplicity, if the annihilator $\Ann I$ of the closed ideal $I$ in $\mathcal M_c(G)$ is synthesizable, then we say that the corresponding closed ideal $\widehat{I}$ in $\mathcal A(G)$ is synthesizable. For each derivation $D$ on $\mathcal A(G)$, we introduce (see \cite{Sze23a}) the set $\widehat{I}_{D,m}$ as the set of all functions $\widehat{\mu}$ in $\mathcal A(G)$ for which
$$
D\widehat{\mu}(m)=\int \Delta_{x,y_1,y_2,\dots,y_k}*f_{D,m}(0)\widecheck{m}(x)\,d\mu(x)=\widehat{\mu}(m)=0
$$
holds for each $k=1,2,\dots$ and $y_1,y_2,\dots,y_k$ in $G$. Then $\widehat{I}_{D,m}$ is a closed ideal in $\mathcal A(G)$. For a family $\mathcal D$ of derivations we write
$$
\widehat{I}_{\mathcal{D},m}=\bigcap_{D\in \mathcal D} \widehat{I}_{D,m}.
$$
Clearly, $\widehat{I}_{\mathcal{D},m}$ is a closed ideal as well. In other words, $\widehat{I}_{\mathcal{D},m}$ is the ideal of those functions in $\mathcal A(D)$ which are annihilated at $m$ by all derivations in the family of derivations $\mathcal D$.

The dual concept is the following: given an ideal $\widehat{I}$ in $\mathcal A(G)$ and an exponential $m$, the set of all derivations on $\mathcal A(G)$ which annihilate $\widehat{I}$ at $m$ is denoted by $\mathcal{D}_{\widehat{I},m}$. The subset of $\mathcal{D}_{\widehat{I},m}$ consisting of all polynomial derivations is denoted by $\mathcal{P}_{\widehat{I},m}$. We have the basic inclusion
\begin{equation}\label{basic}
\widehat{I}\subseteq \bigcap_m \widehat{I}_{\mathcal{D}_{\widehat{I},m},m}\subseteq \bigcap_m \widehat{I}_{\mathcal{P}_{\widehat{I},m},m}.
\end{equation}
We note that if $m$ is not a root of the ideal $\widehat{I}$, then $\mathcal{D}_{\widehat{I},m}=\mathcal{P}_{\widehat{I},m}=\{0\}$, consequently $\widehat{I}_{\mathcal{D}_{\widehat{I},m},m}=\widehat{I}_{\mathcal{P}_{\widehat{I},m},m}=\mathcal A(G)$, hence those terms have no affect on the intersection. 

The ideal $\widehat{I}$ is called {\it localizable}, if we have equalities in \eqref{basic}. The main result in \cite{Sze23a} is that $\widehat{I}$ is synthesizable if and only if it is localizable. We shall use this result in the subsequent paragraphs.

\section{The Fourier algebra of $G\times \Z$}

It is known that every exponential $e:\Z\to \C$ has the form
$$
e(k)=\lambda^k
$$
for $k$ in $\Z$,where $\lambda$ is a nonzero complex number, which is uniquely determined by $e$. For this exponential we use the notation $e_{\lambda}$. It follows that for every commutative topological group $G$, the exponentials on $G\times \Z$ have the form $m\otimes e_{\lambda}:(g,k)\mapsto m(g)e_{\lambda}(k)$, where $m$ is an exponential on $G$, and $\lambda$ is a nonzero complex number. Hence the Fourier transforms in $\mathcal A(G\times \Z)$ can be thought as two variable functions defined on the pairs $(m,\lambda)$, where $m$ is an exponential on $G$, and $\lambda$ is a nonzero complex number.
\vskip.2cm

%\section{The main result}

Let $G$ be a locally compact Abelian group. For each measure $\mu$ in $\mathcal M_c(G\times \Z)$ and for every $k$ in $\Z$ we let: 
$$
S_k(\mu)=\{g:\, g\in G\enskip\text{and}\enskip (g,k)\in \supp \mu\}.
$$
As $\mu$ is compactly supported, there are only finitely many $k$'s in $\Z$ such that $S_k(\mu)$ is nonempty. We have
$$
\supp \mu=\bigcup_{k\in \Z} (S_k(\mu)\times \{k\}),
$$
and
$$
S_k(\mu)\times \{k\}=(G\times \{k\})\cap \supp \mu.
$$
It follows that the sets $S_k(\mu)\times \{k\}$ are pairwise disjoint compact sets in $G\times \Z$, and they are nonempty for finitely many $k$'s only. The restriction of $\mu$ to $S_k(\mu)\times \{k\}$ is denoted by $\mu_k$. Then, by definition
$$
\langle \mu_k, f\rangle=\int f\cdot \chi_k\,d\mu
$$
for each $f$ in $\mathcal C(G\times \Z)$, where $\chi_k$ denotes the characteristic function of the set $S_k(\mu)\times \{k\}$. In other words,
$$
\int f\,d\mu_k=\int f(g,k)\,d\mu(g,l)
$$
holds for each $k$ in $\Z$ and for every $f$ in $\mathcal C(G\times \Z)$. Clearly, $\mu= \sum_{k\in \Z} \mu_k$, and this sum is finite. 

\begin{Lem}\label{convi}
	Let $\mu$ be in $\mathcal M_c(G\times \Z)$. Then, for each $k$ in $\Z$, we have 
	$$
	\mu_k=\mu_0*\delta_{(0,k)}.
	$$
\end{Lem}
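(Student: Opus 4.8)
The plan is to verify the identity by pairing both measures with an arbitrary test function $f$ in $\mathcal C(G\times\Z)$ and checking that the two numbers agree; since a compactly supported measure is determined by its action on continuous functions, this is enough. The only inputs I would use are the defining relation
$$
\int f\,d\mu_k=\int f(g,k)\,d\mu(g,l),
$$
applied both at the index $k$ and (with $k$ replaced by $0$) at the index $0$, together with the fact that convolution against a Dirac mass is a translation.

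First I would rewrite the right-hand side. Since $\mu_0*\delta_{(0,k)}$ is just $\mu_0$ translated by $(0,k)$, and $(0,k)$ shifts only the integer coordinate while fixing the $G$-coordinate, one has
$$
\int f\,d\big(\mu_0*\delta_{(0,k)}\big)=\int f\big((g,l)+(0,k)\big)\,d\mu_0(g,l)=\int f(g,l+k)\,d\mu_0(g,l).
$$
Next I would apply the defining relation for $\mu_0$ to the continuous function $(g,l)\mapsto f(g,l+k)$: freezing the second variable at $0$ and integrating against $\mu$ turns the last integral into $\int f(g,0+k)\,d\mu(g,l)=\int f(g,k)\,d\mu(g,l)$. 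By the defining relation for $\mu_k$, this is precisely $\int f\,d\mu_k$.

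As $f$ was arbitrary, the two measures coincide, which is the claim. The argument is essentially a bookkeeping of coordinates, and the one step that must be handled with care --- the closest thing to an obstacle --- is the application of the defining identity to the shifted function $f(\,\cdot\,,\,\cdot\,+k)$ instead of to $f$ itself, since it is there that the shift by $k$ is converted into evaluation at the second coordinate $k$. Finiteness of the support and measurability are automatic from $\mu\in\mathcal M_c(G\times\Z)$, so no further estimates are needed.
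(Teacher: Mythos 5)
Your argument is correct and is essentially identical to the paper's own proof: both pair the two measures against an arbitrary $f$ in $\mathcal C(G\times \Z)$, reduce the convolution with $\delta_{(0,k)}$ to the translated integral $\int f(g,l+k)\,d\mu_0(g,l)$, and then apply the defining relation of $\mu_0$ to the shifted function to obtain $\int f(g,k)\,d\mu(g,l)=\langle \mu_k,f\rangle$. No gaps; the step you flag as delicate (applying the defining identity to $f(\cdot,\cdot+k)$) is handled exactly the same way in the paper.
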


Here $\delta_{(0,k)}$ denotes the Dirac measure at the point $(0,k)$ in $G\times \Z$.

\begin{proof}
We have for each $f$ in $\mathcal C(G\times \Z)$:
$$
\langle \mu_0*\delta_{(0,k)},f\rangle= \int \int f(g+h,l+n)\,d\mu_0(g,l)\,d\delta_{(0,k)}(h,n)=
$$
$$
\int f(g,l+k)\,d\mu_0(g,l)=\int f(g,k)\,d\mu(g,l)=\langle \mu_k,f\rangle.
$$
\end{proof}

\begin{Lem}\label{suppconv}
	Let $(\widehat{\mu}_i)$ be a sequence in $\mathcal A(G\times \Z)$ which converges to $\widehat{\mu}$ in $\mathcal A(G\times \Z)$. Then there exists an $i_0$ such that for each $i\geq i_0$ we have $S_k(\mu_i)\subseteq S_k(\mu)$ for each $k$ in $\Z$. 
\end{Lem}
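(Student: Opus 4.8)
My plan is to move the whole question to the measure side via the topological isomorphism $\mathcal A(G\times\Z)\cong\mathcal M_c(G\times\Z)$, under which the hypothesis reads $\mu_i\to\mu$ in $\mathcal M_c(G\times\Z)$. The first step is to exploit the (strict) inductive-limit structure of the measure-algebra topology, $\mathcal M_c(G\times\Z)=\varinjlim_C\mathcal M(C)$ taken over the compact subsets $C$: a convergent sequence is bounded and hence lives in a single step, so there are a fixed compact set $C\subseteq G\times\Z$ and an index beyond which $\supp\mu_i\subseteq C$. Such a $C$ meets only finitely many slices $G\times\{k\}$, which at once gives $S_k(\mu_i)=\emptyset\subseteq S_k(\mu)$ for every level outside this finite set and reduces the claim to finitely many $k$. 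For each of these, note that $\{k\}$ is clopen in $\Z$, so $G\times\{k\}$ is clopen in $G\times\Z$ and the restriction map $\mu\mapsto\mu_k$ is norm-nonexpansive, hence continuous; therefore $\mu_{i,k}\to\mu_k$ for every $k$. In the spirit of Lemma \ref{convi}, each slice is, up to a translation in the $\Z$-coordinate, a measure on the base group $G$, so the whole statement is reduced to the analogous assertion on $G$: if $\nu_i\to\nu$ in $\mathcal M_c(G)$, then $\supp\nu_i\subseteq\supp\nu$ for all large $i$.

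On the base group I would argue pointwise first. Fix $g_0\notin\supp\nu$; since $\supp\nu$ is closed and $G$ is locally compact, there is an open neighborhood $U$ of $g_0$ with compact closure and $\overline U\cap\supp\nu=\emptyset$. Then $\int\psi\,d\nu=0$ for every continuous $\psi$ supported in $U$, and convergence yields $\int\psi\,d\nu_i\to0$. The aim is to promote this to the genuine exclusion $g_0\notin\supp\nu_i$ for large $i$. To turn the resulting pointwise indices into a single $i_0$ valid across the whole of $\supp\nu_i$, I would cover the compact portion of the common support lying outside a prescribed open neighborhood of $\supp\nu$ by finitely many such $U$'s and treat them simultaneously by a finite-subcover argument.

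The step I expect to be the main obstacle is precisely this promotion from vanishing integrals to support exclusion. Weak-$*$ convergence, or even convergence of the local masses $|\nu_i|(U)\to|\nu|(U)=0$, does not by itself expel $g_0$ from $\supp\nu_i$, because a point may belong to the support of a measure while carrying arbitrarily little mass in each of its neighborhoods. The argument therefore cannot rest on convergence of integrals alone; it must draw on the full strength of the measure-algebra topology on the fixed step $\mathcal M(C)$, and this is the delicate heart of the lemma. Once the base-group statement is secured in this way, the slice-by-slice reduction of the first paragraph delivers $S_k(\mu_i)\subseteq S_k(\mu)$ for all $k$ simultaneously and for all sufficiently large $i$, completing the proof.
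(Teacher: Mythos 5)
You have named the decisive difficulty yourself, and unfortunately it is not merely technical but fatal to your chosen route: the base-group statement to which you reduce the lemma --- if $\nu_i\to\nu$ in $\mathcal M_c(G)$ then $\supp\nu_i\subseteq\supp\nu$ for all large $i$ --- is false. The topology of $\mathcal A(G)\cong\mathcal M_c(G)$ comes from the dual pairing with $\mathcal C(G)$; take $\nu_i=\delta_0+\tfrac1i\,\delta_{x_1}$ with $x_1\neq 0$. Then $\int\varphi\,d\nu_i=\varphi(0)+\tfrac1i\varphi(x_1)\to\varphi(0)$ for every $\varphi$ in $\mathcal C(G)$, and the convergence is even uniform on every subset of $\mathcal C(G)$ that is uniformly bounded on compact sets; so $\nu_i\to\delta_0$ in the weak-$*$ topology and in the strong dual topology alike, yet $x_1\in\supp\nu_i$ for every $i$. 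Hence no appeal to ``the full strength of the measure-algebra topology on the fixed step $\mathcal M(C)$'' can supply the missing promotion step: point masses whose coefficients tend to $0$ are invisible to integration against continuous functions, but they keep the point in the support. Your preparatory reductions (common compact carrier for the sequence, which in any case needs barrelledness of $\mathcal C(G\times\Z)$; continuity of restriction to the clopen slices $G\times\{k\}$; the use of Lemma \ref{convi}) are essentially sound, but they funnel the problem into a statement that is not true, so the proposal cannot be completed along these lines.

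For comparison, the paper's own proof takes a different and far more modest route: it argues by contradiction, looks only at the projections of the supports onto $\Z$ --- never at the slices $S_k$ inside $G$, which is what you aimed for --- and claims one can build a single continuous $f$ on $G\times\Z$, vanishing over the levels occupied by $\supp\mu$, with $\int f\,d\mu_i=1$ for all $i$ and $\int f\,d\mu=0$; this projection inclusion is also all that is actually invoked later, in the proof of Lemma \ref{closedid}. You should be aware, however, that the obstruction you isolated bites there too: for $\mu_i=\delta_{(x_0,0)}+\tfrac1i\,\delta_{(x_0,1)}$ and $\mu=\delta_{(x_0,0)}$ one has $\widehat\mu_i\to\widehat\mu$ in $\mathcal A(G\times\Z)$, the level $k=1$ meets $\supp\mu_i$ for every $i$ but not $\supp\mu$, and no continuous $f$ can satisfy $f(x_0,0)=0$ together with $f(x_0,0)+\tfrac1i f(x_0,1)=1$ for all $i$; in particular $S_1(\mu_i)\not\subseteq S_1(\mu)$ for every $i$. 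So your attempt is genuinely incomplete at exactly the point you flagged, but the difficulty is intrinsic: it cannot be removed by a cleverer argument on either route, and it indicates that the lemma as stated needs stronger hypotheses (for instance, some control keeping the masses of the $\mu_i$ on each occupied level bounded away from zero) rather than a better proof.
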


\begin{proof}
	Assuming the contrary, we may suppose that, for each $i$ there exists a $k_i$ in $\Z$ such that $k_i$ is in the projection of the support of $\mu_i$ onto $\Z$, but $k_i$ is not in the projection of the support of $\mu$ onto $\Z$. Then we can define a continuous function $f$ on $G\times \Z$ such that $\int f\,d\mu_i=1$ for each $i$, and $\int f\,d\mu=0$, which contradicts to $\widehat{\mu}_i\to \widehat{\mu}$. 
\end{proof}

Given a closed ideal $\widehat{I}$ in $\mathcal A(G\times \Z)$ for each $\widehat{\mu}$ in $\widehat{I}$ we define the measure $\mu_G$ in $\mathcal M_c(G)$ by
$$
\langle \mu_G,\varphi\rangle=\int \varphi(g)\,d\mu(g,l),
$$
whenever $\varphi$ is in $\mathcal C(G)$. Clearly, every $\varphi$ is in $\mathcal C(G)$ can be considered as a function in $\mathcal C(G\times \Z)$, hence this definition makes sense, further we have
$$
\langle \mu_G,\varphi\rangle=\int \varphi(g)\,d\mu_0(g,l).
$$

\begin{Lem}\label{closedid}
	If $I$ is a closed ideal in $\mathcal M_c(G\times \Z)$, then the set $I_G$ of all measures $\mu_G$ with $\mu$ in $I$, is a closed ideal in $\mathcal M_c(G)$.
\end{Lem}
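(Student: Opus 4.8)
The plan is to identify the assignment $\mu\mapsto\mu_G$ with the pushforward $T=\pi_*$ under the projection homomorphism $\pi\colon G\times\Z\to G$, $\pi(g,l)=g$, and to use the section furnished by the inclusion homomorphism $j\colon G\to G\times\Z$, $j(g)=(g,0)$. Writing $\iota=j_*$, both $T$ and $\iota$ are continuous homomorphisms between the convolution algebras $\mathcal M_c(G\times\Z)$ and $\mathcal M_c(G)$, the defining formula $\langle\mu_G,\ff\rangle=\int\ff(g)\,d\mu(g,l)$ says exactly that $\mu_G=T\mu$, and $T\iota=(\pi\circ j)_*=\mathrm{id}$. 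On the Fourier side $T$ is the restriction of $\widehat\mu(m,\lambda)$ to $\lambda=1$, while $\iota$ is the extension that is constant in $\lambda$; note that $\iota(\nu)$ is just $\nu$ carried by the slice $G\times\{0\}$, which indeed lies in $\mathcal M_c(G\times\Z)$.

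Granting this, the ideal property is immediate. Linearity of $T$ makes $I_G=T(I)$ a linear subspace. For an arbitrary $\nu$ in $\mathcal M_c(G)$ and $\mu$ in $I$, the measure $\iota(\nu)*\mu$ lies in $I$ because $I$ is an ideal; applying $T$ and using that it is a homomorphism together with $T\iota=\mathrm{id}$ gives
$$
T\big(\iota(\nu)*\mu\big)=T\iota(\nu)*T\mu=\nu*\mu_G ,
$$
so that $\nu*\mu_G$ lies in $T(I)=I_G$. (The same identity $(\iota(\nu)*\mu)_G=\nu*\mu_G$ may be obtained by the direct computation underlying Lemma~\ref{convi}.)

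The real work is closedness, which I expect to be the main obstacle: a continuous linear image of a closed subspace need not be closed, so the proof must genuinely use that $I$ is an ideal. Here is the route I would take. Let $\sigma$ lie in the closure of $I_G$ and choose $\mu^{(i)}$ in $I$ with $\mu^{(i)}_G\to\sigma$ in $\mathcal M_c(G)$. By continuity of $\iota$ we have $\iota(\mu^{(i)}_G)\to\iota(\sigma)$, and since $\iota(\mu^{(i)}_G)=\mu^{(i)}-\big(\mu^{(i)}-\iota(\mu^{(i)}_G)\big)$ with the bracketed term lying in $\Ker T$, each $\iota(\mu^{(i)}_G)$ belongs to $I+\Ker T$. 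Thus it suffices to show that $I+\Ker T$ is closed in $\mathcal M_c(G\times\Z)$: then $\iota(\sigma)\in I+\Ker T$ and hence $\sigma=T\iota(\sigma)\in T(I)=I_G$.

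To establish closedness of $I+\Ker T$ I would exploit the explicit shape of the kernel. One has $\Ker T=\{\nu:\nu_G=0\}$, which on the Fourier side is the principal ideal generated by $\lambda-1$; equivalently, the division $\widehat\mu=(\lambda-1)\widehat Q+\widehat{\mu_G}$ with $\widehat{\mu_G}$ constant in $\lambda$ yields the topological direct sum $\mathcal M_c(G\times\Z)=\Ker T\oplus\iota(\mathcal M_c(G))$. Given a convergent sequence in $I+\Ker T$, Lemma~\ref{suppconv} bounds the set of $\Z$-slices that occur and confines the $G$-support within each slice to a fixed compact set, so the convergence reduces to total variation convergence of measures living on finitely many slices over a fixed compact subset of $G$. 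On this piece one can split each term along the above direct sum and pass to the limit, using the closedness of $I$ and of $\Ker T$ to place the limit in $I+\Ker T$. This final lifting — converting the slicewise, norm-bounded convergence supplied by Lemma~\ref{suppconv} into membership in the closed ideal $I$ — is the delicate step, and the one where the ideal structure (embodied in the factor $\lambda-1$ that cuts out $\Ker T$) is indispensable.
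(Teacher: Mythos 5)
Your setup and your proof of the ideal property are correct and essentially identical to the paper's: your $\iota(\nu)$ is the paper's $\tilde{\nu}$, and the identity $\bigl(\iota(\nu)*\mu\bigr)_G=\nu*\mu_G$ is exactly how the paper shows $I_G$ is an ideal. The genuine gap is in the closedness part, which you yourself flag as ``the delicate step'' but never carry out. First, the reduction to ``$I+\Ker T$ is closed'' gains nothing: since $I+\Ker T=T^{-1}(I_G)$ and $I_G=(T\iota)^{-1}(I_G)=\iota^{-1}\bigl(T^{-1}(I_G)\bigr)$, closedness of $I+\Ker T$ is \emph{equivalent} to closedness of $I_G$ (one direction by continuity of $T$, the other by continuity of $\iota$); you have only restated the problem. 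Second, the sketched proof of the restatement is circular. Take a net $\chi^{(i)}\to\chi$ in $I+\Ker T$ and split along your direct sum: $\chi^{(i)}=\bigl(\chi^{(i)}-\iota(\chi^{(i)}_G)\bigr)+\iota(\chi^{(i)}_G)$. The first components converge to $\chi-\iota(\chi_G)$, which lies in the closed subspace $\Ker T$ --- fine; but the second components are $\iota(\chi^{(i)}_G)$ with $\chi^{(i)}_G\in I_G$, and to place their limit $\iota(\chi_G)$ in $I+\Ker T$ you need precisely that $\chi_G$, a limit of elements of $I_G$, lies in $I_G$ --- the statement being proved. The closedness of $I$ cannot be brought to bear in this scheme: neither direct-sum component of an element of $I$ need belong to $I$, and the approximating net in $I$ itself need not converge (only its projection does). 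A further slip: weak-$*$ convergence of measures carried by a fixed compact set does not reduce to total variation convergence (on $\R$, $\delta_{1/n}-\delta_0\to 0$ weak-$*$ while the variation norms equal $2$), so that intermediate claim is also unjustified.

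What is missing is a mechanism that produces an element of $I$ mapping onto the limit, and this is exactly where the paper's argument diverges from yours. The paper upgrades convergence of the projections to convergence of the measures themselves: from $\mu_{i,G}\to\xi$ it deduces $\mu_{i,0}\to\tilde{\xi}_0$, then invokes Lemma~\ref{convi} (that is, $\mu_{i,k}=\mu_{i,0}*\delta_{(0,k)}$) together with Lemma~\ref{suppconv} to control the slices uniformly and conclude $\mu_i=\sum_k\mu_{i,k}\to\sum_k\tilde{\xi}_k=\tilde{\xi}$, and only then uses the \emph{closedness} of $I$ (not its ideal property, contrary to your expectation) to get $\tilde{\xi}\in I$, whence $\xi=\tilde{\xi}_G\in I_G$. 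Nothing in your outline plays the role of Lemma~\ref{convi}: without some structural input tying the higher slices $\mu_{i,k}$ to the zero slice $\mu_{i,0}$, the passage from convergence of the projections $\mu^{(i)}_G$ to membership of the limit in $I_G$ does not go through, and your proof is incomplete.
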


\begin{proof}
Clearly $\mu_G+\nu_G=(\mu+\nu)_G$ and $\lambda\cdot \mu_G=(\lambda\cdot \mu)_G$. Let $\mu_G$ be in $I$ and $\xi$ in $\mathcal M_c(G)$. Then we have for each $\varphi$ in $\mathcal C(G)$:
$$
\langle \xi*\mu_G,\varphi\rangle=\int \int \varphi(g+h)\,d\xi(g)\,d\mu_G(h)=\int \int  \varphi(g+h)\,d\xi(g)\,d\mu(h,l).
$$
On the other hand, we extend $\xi$ from $\mathcal M_c(G)$ to $\mathcal M_c(G\times \Z)$ by the definition
$$
\langle \tilde{\xi},f\rangle=\int f(g,0)\,d\xi(g)
$$
whenever $f$ is in $\mathcal C(G\times \Z)$. Then 
$$
\langle \tilde{\xi}_{G},\varphi\rangle=\int \varphi(g)\,d\tilde{\xi}_0(g,l)=\int \varphi(g)\,d\xi(g)=\langle \xi,\varphi\rangle,
$$
that is $\tilde{\xi}_{G}=\xi$. Finally, a simple calculation shows that
$$
\langle \xi*\mu_G,\varphi\rangle=\langle (\tilde{\xi}*\mu)_G,\varphi\rangle,
$$
hence $\xi*\mu_G=(\tilde{\xi}*\mu)_G$ is in $I_G$, as $\tilde{\xi}*\mu$ is in $I$.
\vskip.2cm

Now we show that the ideal $I_G$ is closed. Assume that $(\mu_i)$ is a generalized sequence in $I$ such that the generalized sequence $(\mu_{i,G})$ converges to $\xi$ in $\mathcal M_c(G)$. This means that 
$$
\lim_i \int \varphi(g)\,d\mu_{i,G}(g)=\int \varphi(g)\,d\xi(g)
$$
holds for each $\varphi$ in $\mathcal C(G)$. In particular, for each exponential $m$ on $G$ we have 
$$
\lim_i \int \widecheck{m}(g)\,d\mu_{i,0}(g,l)=\lim_i \int \widecheck{m}(g)\,d\mu_{i,G}(g)=\int \widecheck{m}(g)\,d\xi(g)=\int \widecheck{m}(g)\,d\tilde{\xi}_0(g,l).
$$
In other words,
$$
\lim_i \widehat{\mu}_{i,0}=\widehat{\tilde{\xi}}_0
$$
holds. It follows
$$
\lim_i \mu_{i,0}=\tilde{\xi}_0,
$$
consequently
$$
\tilde{\xi}_k=\tilde{\xi}_0*\delta_{(0,k)}=\lim_i \mu_{i,0}*\delta_{(0,k)}=\lim_i \mu_{i,k}.
$$
By Lemma\ref{suppconv}, we have $S_k(\mu_{i,0})\subseteq S_k(\tilde{\xi}_0)$ for large $i$. We may assume that  $S_k(\mu_{i,0})\subseteq S_k(\tilde{\xi}_0)$ holds for each $i$. Then we infer
$$
\tilde{\xi}=\sum_k \tilde{\xi}_k=\sum_k \lim_i \mu_{i,k}=\lim_i \sum_k  \mu_{i,k}=\lim_i \mu_{i},
$$
where we used the fact that in each sum all those terms are zero, where $S_k(\tilde{\xi}_0)$ is empty. As $I$ is closed, hence $\tilde{\xi}$ is in $I$, which proves that $\tilde{\xi}=\tilde{\xi}_{G}$ is in $I_G$, that is, $I_G$ is closed.
\end{proof}

For the Fourier transforms of the measures $\mu_k$ we have:
$$
\widehat{\mu}_k(m,\lambda)=\int \widecheck{m}(g) \lambda^{-l}\,d\mu_k(g,l)=\lambda^{-k} \cdot \int \widecheck{m}(g)\,d\mu(g,l),
$$
in particular
$$
\widehat{\mu}_0(m,\lambda)=\int \widecheck{m}(g)\,d\mu(g,l)
$$
for each exponential $m$ on $G$ and for every nonzero complex number $\lambda$.
We shall use the following simple observation.
	Let $\widehat{I}$ be a closed ideal in the Fourier algebra $\mathcal A(G\times \Z)$ and let $\widehat{\mu}$ be in $\widehat{I}$. If $D$ is a polynomial derivation  with the generating polynomial $f_{D,m,\lambda}$ at $(m,\lambda)$ (see \cite{Sze23a}) which annihilates $\widehat{I}$ at the exponential $(m,\lambda)$, then $D\widehat{\mu}_k(m,\lambda)=0$ holds if and only if 
	$$
	\int f_{D,m,\lambda}(g,k)\widecheck{m}(g)\,d\mu(g,l)=0.
	$$
	Indeed, we have
	$$
	D\widehat{\mu}_k(m,\lambda)=\int f_{D,m,\lambda}(g,l)\widecheck{m}(g)\,d\mu_k(g,l)=
	%$$
	%$$
	\int f_{D,m,\lambda}(g,k)\widecheck{m}(g)\,d\mu(g,l).
	$$

Our main result in this paper is the following.

\begin{Thm}\label{main}
	Let $G$ be a locally compact Abelian group. Then spectral synthesis holds on $G$ if and only if it holds on $G\times \Z$.
\end{Thm}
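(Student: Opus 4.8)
The ``only if'' part is immediate: the projection $(g,k)\mapsto g$ is a continuous homomorphism of $G\times\Z$ onto $G$, so Theorem~\ref{subhom} yields spectral synthesis on $G$ as soon as it holds on $G\times\Z$. The whole difficulty is the converse, and the plan is to assume spectral synthesis on $G$ and to verify, via the main result of \cite{Sze23a}, that every closed ideal $\widehat I$ in $\mathcal A(G\times\Z)$ is \emph{localizable}. Since the right-hand intersection in \eqref{basic} is the smaller one, it suffices to prove the reverse inclusion $\bigcap_{(m,\lambda)}\widehat I_{\mathcal P_{\widehat I,(m,\lambda)},(m,\lambda)}\subseteq\widehat I$, the exponentials of $G\times\Z$ being exactly the $(m,\lambda)$ with $m$ an exponential on $G$ and $\lambda\in\C\setminus\{0\}$.

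So fix $\widehat\mu$ in that intersection and decompose $\mu=\sum_k\mu_k$ into its finitely many nonzero slices, so that $\widehat\mu=\sum_k\widehat\mu_k$ with $\widehat\mu_k(m,\lambda)=\lambda^{-k}\widehat\mu_0(m,\lambda)$ by Lemma~\ref{convi}. The decisive structural input is the observation recorded just before the theorem: for a polynomial derivation $D$ annihilating $\widehat I$ at $(m,\lambda)$, the quantity $D\widehat\mu_k(m,\lambda)$ is the integral of $f_{D,m,\lambda}(g,k)\widecheck m(g)$ against $\mu$. For each fixed slice index $k$, the function $g\mapsto f_{D,m,\lambda}(g,k)$ is a polynomial in the additive functions of $G$, hence is the generating polynomial of a polynomial derivation on $\mathcal A(G)$ at $m$. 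In this way the single $\Z$-variable---carried by the powers of $k$, that is, by the $\lambda$-derivatives---is separated from the $G$-directions, and the condition that every $D$ annihilating $\widehat I$ kills $\widehat\mu$ at $(m,\lambda)$ is converted into a family of annihilation conditions, at the exponential $m$, for $G$-derivations acting on the slice data.

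The next step is to feed these conditions into spectral synthesis on $G$. The natural vehicle is the projected ideal $I_G$ of Lemma~\ref{closedid}, which is a closed ideal of $\mathcal M_c(G)$ and is therefore localizable by hypothesis; one identifies the $G$-derivations produced above as annihilators of $\widehat{I_G}$ (and of its translates governing the higher slices) at $m$. Localizability on $G$ then forces the relevant slice marginals into $\widehat{I_G}$, uniformly in $\lambda$ and compatibly with the structure $\widehat\mu_k=\lambda^{-k}\widehat\mu_0$. Finally one reassembles: using Lemma~\ref{convi} to move between slices by the shift $\delta_{(0,k)}$, the finiteness of the $\Z$-support together with Lemma~\ref{suppconv} to control supports, and the closedness of $I_G$ from Lemma~\ref{closedid} to keep limits inside the ideal, one concludes that $\widehat\mu=\sum_k\widehat\mu_k$ lies in $\widehat I$.

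I expect the crux to be exactly the separation-of-variables for derivations at the heart of the second and third paragraphs. A general polynomial derivation on $\mathcal A(G\times\Z)$ at $(m,\lambda)$ mixes the one-dimensional $\lambda$-derivative with the $G$-directional derivatives, so the real work is to show that the full family of such annihilating derivations decomposes, along the powers of the $\Z$-coordinate, into $G$-derivations whose vanishing is controlled by the \emph{localizable} ideal $\widehat{I_G}$, while the remaining one-variable $\lambda$-part contributes only finite-order vanishing conditions that are automatically met inside the ideal---this being the precise sense in which the benign factor $\Z$ (on which synthesis holds, and whose Fourier algebra is one-variable) adds no new obstruction, in contrast with the $\R\times\R$ phenomenon of \cite{MR0390759}. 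The most delicate point is to verify that the measure reconstructed from the slice information lands in $\widehat I$ itself rather than in some a priori larger ideal, which is where the coupling between slices must be handled with care rather than assuming any tensor-product splitting of $\widehat I$.
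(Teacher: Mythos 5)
Your ``only if'' direction and your general frame (localizability via \eqref{basic}, the slice decomposition $\mu=\sum_k\mu_k$, the projected ideal $I_G$ of Lemma~\ref{closedid}) agree with the paper, but the engine of your converse runs in the wrong direction, and this is a genuine gap. To invoke localizability of $\widehat{I}_G$ --- which is what spectral synthesis on $G$ gives you --- you must verify that the projected datum $\widehat{\nu}_G$ is annihilated at $m$ by \emph{every} polynomial derivation $d$ in $\mathcal{P}_{\widehat{I}_G,m}$. Your construction instead starts from a derivation $D$ in $\mathcal{P}_{\widehat{I},(m,\lambda)}$ and slices its generating polynomial into the $G$-derivations $g\mapsto f_{D,m,\lambda}(g,k)$. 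This produces annihilation conditions only for the family of $G$-derivations that happen to arise as slices of members of $\mathcal{P}_{\widehat{I},(m,\lambda)}$ --- a family which you neither show exhausts $\mathcal{P}_{\widehat{I}_G,m}$ nor even show is contained in it --- so localizability of $\widehat{I}_G$ cannot be applied. Moreover, for a general $D\in\mathcal{P}_{\widehat{I},(m,\lambda)}$ the hypothesis on $\widehat{\nu}$ gives only the single coupled equation $0=D\widehat{\nu}(m,\lambda)=\sum_k D\widehat{\nu}_k(m,\lambda)$; it does not decouple into the per-slice conditions your second paragraph asserts, because the slices $\mu_k$ of a measure $\mu$ in $I$ need not belong to $I$ (extracting a slice amounts to extracting a Laurent coefficient in the $\lambda$-variable, an operation under which closed ideals are not stable).

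The paper's proof supplies exactly the step your sketch is missing, and it goes in the opposite, \emph{lifting} direction: given an arbitrary $d\in\mathcal{P}_{\widehat{I}_G,m}$ with generating polynomial $p_{d,m}$, one defines $D$ on $\mathcal{A}(G\times\Z)$ by $D\widehat{\mu}(m,\lambda)=\int p_{d,m}(g)\widecheck{m}(g)\lambda^{-l}\,d\mu(g,l)$, i.e.\ one takes the generating polynomial constant in the $\Z$-variable, modulated by $\lambda^{-l}$. For this special $D$ every slice contribution factors as $D\widehat{\mu}_k(m,\lambda)=\lambda^{-k}\,d\widehat{\mu}_G(m)=0$ for each $\widehat{\mu}\in\widehat{I}$, so that $D\in\mathcal{P}_{\widehat{I},(m,\lambda)}$; consequently $D\widehat{\nu}(m,\lambda)=0$, and reading off the zero slice gives $d\widehat{\nu}_G(m)=D\widehat{\nu}_0(m,\lambda)=0$. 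Since $d$ was arbitrary, localizability of $\widehat{I}_G$ yields $\widehat{\nu}_G\in\widehat{I}_G$. This lifting is precisely the surjectivity statement your slicing picture would need but does not prove. Finally, you are right to flag the reassembly step (passing from $\widehat{\nu}_G\in\widehat{I}_G$ to $\widehat{\nu}\in\widehat{I}$) as delicate; note that the paper disposes of it by bare assertion, so your proposal does not resolve it either --- but the unambiguous defect in your argument, measured against the paper's proof, is the absence of the lifting construction, without which the hypothesis of spectral synthesis on $G$ is never actually brought to bear.
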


\begin{proof}
	If spectral synthesis holds on $G\times Z$, then it obviously holds on its continuous homomorphic images, in particular, it holds on $G$, which is the projection of $G\times \Z$ onto the first component..
	\vskip.2cm
	
	Conversely,  we assume that spectral synthesis holds on $G$. This means, that every closed ideal in the Fourier algebra of $G$ is localizable, and we need to show the same for all closed ideals of the Fourier algebra of $G\times \Z$. 
	\vskip.2cm
	
	We consider the closed ideal $\widehat{I}$ in the Fourier algebra $\mathcal A(G\times \Z)$, and we assume that $\widehat{I}$ is non-localizable, that is, there is a measure $\nu$ in $\mathcal M_c(G\times \Z)$ such that $\widehat{\nu}$ is annihilated by $\mathcal P_{\widehat{I},m,\lambda}$ for each $m$ and $\lambda$, but $\widehat{\nu}$ is not in $\widehat{I}$. We show that $\widehat{\nu}_G$ is in $\widehat{I}_G$; then it will follow that $\widehat{\nu}$ is in $\widehat{I}$, a contradiction. 
	\vskip.2cm
	
	Suppose that a polynomial derivation $d$ annihilates $\widehat{I}_G$ at $m$. Then we have
	$$
	d\widehat{\mu}_G(m)=\int p_{d,m}(g)\widecheck{m}(g)\,d\mu_G(g)=\int p_{d,m}(g)\widecheck{m}(g)\,d\mu(g,l)=0
	$$
	for each $\widehat{\mu}$ in $\widehat{I}_G$ and for every exponential $m$ on $G$, where $p_{d,m}:G\to\C$ is the generating polynomial of $d$ at $m$.
	Then we define the polynomial derivation $D$ on the Fourier algebra $\mathcal A(G\times \Z)$ by
	$$
	D\widehat{\mu}(m,\lambda)=\int p_{d,m}(g) \widecheck{m}(g)\lambda^{-l}\,d\mu(g,l).
	$$
If $\widehat{\mu}$ is in $\widehat{I}$, then we have
$$
D\widehat{\mu}_k(m,\lambda)=\int p_{d,m}(g) \widecheck{m}(g)\lambda^{-l}\,d\mu_k(g,l)=\int p_{d,m}(g) \widecheck{m}(g)\,d\mu(g,l)\cdot \lambda^{-k}=0
$$
for each $k$ in $\Z$. As $\widehat{\mu}=\sum_{k\in \Z} \widehat{\mu}_k$, it follows that $D\widehat{\mu}(m,\lambda)=0$ for each $\widehat{\mu}$ in $\widehat{I}$. In other words, $D$ is in $\mathcal P_{\widehat{I},m,\lambda}$ for each exponential $m$ and nonzero complex number $\lambda$. In particular, $\widehat{\nu}$ is annihilated by $D$:
$$
D\widehat{\nu}(m,\lambda)=\int p_{d,m}\widecheck{m}(g)\lambda^{-l}\,d\nu(g,l)=0.
$$
It follows
$$
d\widehat{\nu}_G(m)=D\widehat{\nu}_{0}(m,\lambda)=\int p_{d,m}(g)\widecheck{m}(g)\,d\nu(g,l)=0.
$$
As $d$ is an arbitrary polynomial derivation which annihilates $\widehat{I}_G$ at $m$, we have that $\widehat{\nu}_G$ is annihilated by $\mathcal P_{\widehat{I}_G,m}$ for each $m$. As spectral synthesis holds on $G$, the ideal $\widehat{I}_G$ is localizable, consequently $\widehat{\nu}_G$ is in $\widehat{I}_G$, which implies that $\widehat{\nu}$ is in $\widehat{I}$, and our theorem is proved.
\end{proof}

\begin{Cor}
	Let $G$ be a compactly generated locally compact Abelian group. Then spectral synthesis holds on $G$ if and only if $G$ is topologically isomorphic to $\R^a\times \Z^b\times F$, where $a\leq 1$ and $b$ are nonnegative integers, and $F$ is an arbitrary compact Abelian group.
\end{Cor}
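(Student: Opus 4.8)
The plan is to combine the two ``extension'' results at our disposal---Theorem~\ref{main}, which says that adjoining a copy of $\Z$ preserves spectral synthesis, and the result of \cite{Sze23b}, which says that adjoining a compact group preserves spectral synthesis---with the classical structure theorem for compactly generated locally compact Abelian groups. Recall that the latter asserts that every such group $G$ is topologically isomorphic to $\R^a\times \Z^b\times K$ for some nonnegative integers $a,b$ and some compact Abelian group $K$. Thus both implications reduce to tracking what happens to the ``vector part'' $\R^a$.

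For the sufficiency direction I would start from the observation that spectral synthesis holds on $\R^a$ whenever $a\leq 1$: for $a=0$ the group is trivial and the only varieties are $\{0\}$ and the space of constants, both finite dimensional, while for $a=1$ this is precisely L.~Schwartz's theorem \cite{MR0023948}. Given $G\cong \R^a\times \Z^b\times F$ with $a\leq 1$, I would first apply \cite{Sze23b} to pass from $\R^a$ to $\R^a\times F$, and then apply Theorem~\ref{main} exactly $b$ times, adjoining one further factor of $\Z$ at each stage, to conclude that spectral synthesis holds on $\R^a\times F\times \Z^b$, which is topologically isomorphic to $G$.

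For the necessity direction I would write $G\cong \R^a\times \Z^b\times K$ as above and argue that synthesis on $G$ forces $a\leq 1$. If instead $a\geq 2$, then the coordinate projection $G\to \R^2$ onto two of the real factors is a continuous homomorphism of $G$ onto $\R\times \R$, so by Theorem~\ref{subhom} spectral synthesis would hold on $\R\times \R$; this contradicts the theorem of Gurevich \cite{MR0390759}. Hence $a\leq 1$, and taking $F=K$ yields the asserted form.

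The genuine mathematical content is entirely packaged in Theorem~\ref{main} and in \cite{Sze23b}; the corollary itself is an assembly argument, so I do not expect a serious obstacle. The only step demanding a little care is the necessity direction, where one must be sure that $\R\times\R$ really does occur as a continuous homomorphic image of $G$---which is guaranteed by the explicit product decomposition supplied by the structure theorem---so that the homomorphic-image principle of Theorem~\ref{subhom} can transport Gurevich's counterexample back to $G$.
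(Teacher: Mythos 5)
Your proposal is correct and takes essentially the same route as the paper: the structure theorem combined with Theorem~\ref{subhom} and Gurevich's counterexample \cite{MR0390759} for necessity, and Schwartz's theorem combined with Theorem~\ref{main} and \cite{Sze23b} for sufficiency. The only cosmetic differences are that you adjoin the compact factor $F$ before the $\Z$ factors (the paper does the reverse, which is equally valid since both extension results apply to arbitrary locally compact Abelian groups) and that you spell out the trivial case $a=0$, which the paper leaves implicit.
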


\begin{proof}
	By the Structure Theorem of compactly generated locally compact Abelian groups (see \cite[(9.8) Theorem]{MR0156915}) $G$ is topologically isomorphic to
	$$
	\R^a\times \Z^b\times F,
	$$
	where $a,b$ are nonnegative integers, and $F$ is a compact Abelian group. If spectral synthesis holds on $G$, then it holds on its projection $\R^a$. By the results in \cite{MR0390759}, spectral synthesis holds on $\R^a$ if and only if $a\leq 1$, hence $G$ is topologically isomorphic to $\R^a\times \Z^b\times F$ where $a\leq 1$ and $b$ are nonnegative integers, and $F$ is a compact Abelian group.  
	\vskip.2cm
	
	Conversely, let $G=\R\times \Z^b\times F$ with $b$ is a nonnegative integer, and $F$ a compact Abelian group. By the fundamental result of L.~Schwartz in \cite{MR0023948}, spectral synthesis holds on $\R$. By repeated application of Theorem \ref*{main}, we have that spectral synthesis holds on $\R\times \Z^b$ with any nonnegative integer $b$. Finally, by the results in \cite{Sze23b}, in particular, by Theorem 1., spectral synthesis holds on $\R\times \Z^b\times F$. Our proof is complete.
\end{proof}

\begin{Cor}
	Let $G$ be a locally compact Abelian group.  Let $B$ denote the closed subgroup of all compact elements in $G$. Then spectral synthesis holds on $G$ if and only if $G/B$ is topologically isomorphic to $\R^n\times F$, where $n\leq 1$ is a nonnegative integer, and $F$ is a discrete torsion free Abelian group of finite rank.
\end{Cor}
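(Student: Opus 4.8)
The plan is to treat the two implications separately, handling necessity with Theorem~\ref{subhom} and sufficiency with Theorem~\ref{main} together with the structure theory, and in both directions passing to the quotient $G/B$ by means of the result of \cite{Sze23b} that the compact part may be neglected. For necessity I would argue as follows. Since the canonical projection $G\to G/B$ is a continuous surjective homomorphism, Theorem~\ref{subhom} gives spectral synthesis on $G/B$. As $B$ is precisely the set of compact elements, $G/B$ has no nontrivial compact element, so every compact subgroup of $G/B$ is trivial. By the structure theory of locally compact Abelian groups $G/B$ has an open subgroup of the form $\R^n\times K$ with $K$ compact, and triviality of compact subgroups forces $K=\{0\}$, whence $G/B\cong \R^n\times F$ with $F$ discrete. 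In a discrete group the compact elements are exactly the torsion elements, so $F$ is torsion free. Projecting $G/B$ onto its two factors and applying Theorem~\ref{subhom} again, spectral synthesis holds on $\R^n$ and on $F$; by \cite{MR0390759} this forces $n\le1$, and by \cite{MR2340978} the discrete group $F$ has finite torsion free rank. This yields the asserted form of $G/B$.

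For sufficiency, assume $G/B\cong\R^n\times F$ with $n\le1$ and $F$ discrete torsion free of finite rank $b$. By \cite{Sze23b} it suffices to prove spectral synthesis on $G/B$, i.e.\ on $\R^n\times F$. Spectral synthesis holds on $\R^n$ for $n\le1$ by \cite{MR0023948}. I would then exhaust $F$ by an increasing chain of finitely generated subgroups $F_1\subseteq F_2\subseteq\cdots$ with $\bigcup_k F_k=F$; each $F_k$ is finitely generated and torsion free, hence free, and, choosing $F_1$ to contain a maximal independent set, each $F_k$ has rank $b$, so $F_k\cong\Z^b$. Repeated application of Theorem~\ref{main}, starting from $\R^n$, then shows that spectral synthesis holds on every $\R^n\times F_k\cong\R^n\times\Z^b$.

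The remaining and principal difficulty is to pass from the subgroups $\R^n\times F_k$ to their union $\R^n\times F$. Because $F$ is discrete each $F_k$ is open, so $\R^n\times F=\bigcup_k(\R^n\times F_k)$ is an increasing union of open subgroups on each of which spectral synthesis holds, and the claim reduces to the assertion that spectral synthesis is inherited by such an inductive union. I expect this to be the hard step, since it is exactly the passage that cannot be performed in finitely many of the elementary extensions, and it is what separates the present general statement from the compactly generated case. The natural approach is to adapt the localization argument proving Theorem~\ref{main}. Writing $H=\bigcup_k H_k$ with $H_k=\R^n\times F_k$, a compactly supported measure on $H$ meets only finitely many cosets of a fixed $H_k$, and after a translation is carried by a single such coset, so $\mathcal M_c(H)$ is filtered by the subalgebras attached to the $H_k$ in the same way that $\mathcal M_c(G\times\Z)$ was decomposed along the cosets $G\times\{k\}$ in Lemma~\ref{convi} and Lemma~\ref{closedid}.

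Concretely, given a non-localizable closed ideal $\widehat I$ in $\mathcal A(H)$ with witness $\widehat\nu$ annihilated by all $\mathcal P_{\widehat I,m}$, I would choose $k$ so large that $\nu$ is carried by $H_k$, form the induced ideal on $H_k$ as in Lemma~\ref{closedid}, and lift polynomial derivations from $\mathcal A(H_k)$ to $\mathcal A(H)$ exactly as in the proof of Theorem~\ref{main} — the only change being that the factor $\lambda^{-l}$ there, an exponential on $\Z$, is now replaced by an exponential (character) on the discrete quotient $H/H_k$. Localizability of the induced ideal on $H_k$, which holds because spectral synthesis holds there, then forces $\widehat\nu$ into $\widehat I$, a contradiction, giving localizability on $H$ and hence spectral synthesis on $\R^n\times F$ and on $G$. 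The genuine work lies in carrying out this lifting for the more complicated quotient $H/H_k$ and in verifying that the finitely-many-cosets decomposition is compatible with the limit over $k$; once that inductive-union principle is secured, the corollary follows.
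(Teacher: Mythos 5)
Your necessity argument is correct and essentially the paper's: the paper reaches the structure of $G/B$ via \cite[(24.34) and (24.35)]{MR0156915} (sufficiently many real characters) rather than via an open subgroup $\R^n\times K$, but the use of Theorem \ref{subhom}, \cite{MR0390759} and \cite{MR2340978} is identical. The first half of your sufficiency also matches the paper: reduce to $G/B\cong\R^n\times F$ by \cite[Theorem 1]{Sze23b}, and get synthesis on $\R^n\times F_k\cong\R^n\times\Z^b$ by iterating Theorem \ref{main}. The divergence is in the passage from $\Z^b$ to $F$. The paper does this in one stroke, asserting that $F$ is a continuous homomorphic image of $\Z^k$ and invoking Theorem \ref{subhom}; you instead write $F=\bigcup_k F_k$ with $F_k\cong\Z^b$ and appeal to an ``inductive-union principle'' which you explicitly leave unproved. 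Your underlying worry is legitimate --- a torsion-free group of finite rank need not be finitely generated (e.g.\ $\mathbb{Q}$), so it need not be a quotient of any $\Z^k$ --- but the replacement step is precisely where your proof stops, and it is a genuine gap, not a routine verification.

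It is a gap for two concrete reasons. First, the principle you reduce to --- spectral synthesis is inherited by an increasing union of open subgroups on each of which synthesis holds --- is false as stated: the discrete group $\bigoplus_{n\in\N}\Z=\bigcup_n\Z^n$ is an increasing union of open subgroups, each of finite torsion-free rank and hence synthesizable by \cite{MR2340978}, while the union has infinite torsion-free rank, so synthesis fails on it by the same theorem. Any correct version must therefore use the uniform rank bound (all $F_k$ of rank $b$) in an essential way, and nothing in your sketch does. Second, the proposed adaptation of the proof of Theorem \ref{main} breaks at its first step: the induced ideal $\widehat{I}_G$ of Lemma \ref{closedid} is defined by pushing measures forward along the projection homomorphism $G\times\Z\to G$, i.e.\ it uses that $G$ is a retract (direct factor) of $G\times\Z$. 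In your situation $H_k=\R^n\times F_k$ is in general not a retract of $H=\R^n\times F$: for $F=\mathbb{Q}$ and $F_k=\Z$ we have $\mathrm{Hom}(\mathbb{Q},\Z)=0$, so there is no homomorphism $H\to H_k$ restricting to the identity on $H_k$, hence no analogue of $\mu\mapsto\mu_G$ and no ideal formed ``as in Lemma \ref{closedid}''. Relatedly, a polynomial derivation on $\mathcal A(H_k)$ annihilating $\widehat{I}\cap\mathcal A(H_k)$ need not lift to one annihilating all of $\widehat{I}$, which is exactly what the factor $\lambda^{-l}$ accomplishes in the product situation of Theorem \ref{main}. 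As it stands, your argument establishes the corollary only when $F$ is finitely generated, i.e.\ the case in which the paper's homomorphic-image step applies directly.
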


\begin{proof}
First we prove the necessity. If spectral synthesis holds on $G$, then it holds on $G/B$. By \cite[(24.34) Theorem]{MR0156915}, $G/B$ has sufficiently enough real characters. By \cite[(24.35) Corollary]{MR0156915}, $G/B$ is topologically isomorphic to $\R^n\times F$, where $n$ is a nonnegative integer, and $F$ is a discrete torsion-free Abelian group. As spectral synthesis holds on $\R^n\times F$, it holds on the continuous projections $\R^n$ and $F$. Then we have $n\leq 1$, and the torsion-free rank of $F$ is finite, by \cite{MR2340978}. 
\vskip.2cm

For the sufficiency, if $F$ is a torsion-free discrete Abelian group with finite rank, then it is the (continuous) homomorphic image of $\Z^k$ with some nonnegative integer $k$. By repeated application of Theorem \ref{main},  we have that spectral synthesis holds on $\R\times \Z^k$, and then it holds on its continuous homomorphic image $\R\times F$. Finally, by \cite[Theorem 1]{Sze23b}, we have that spectral synthesis holds on $G$. 
\end{proof}

	\end{document}